\numberwithin{theorem}{section}
\newcommand{\R}{\mathbb{R}}
\newcommand{\E}{\mathcal{E}}
\newcommand{\Lt}{L^2(\Omega)}
\renewcommand{\u}{\vec{u}}
\newcommand{\f}{\vec{f}}
\newcommand{\y}{\vec{y}}
\newcommand{\e}{\vec{\eta}}
\newcommand{\w}{\vec{\omega}}
\newcommand{\hw}{\hat{\vec{\omega}}}
\renewcommand{\div}{\operatorname{div}}
\newcommand{\curl}{\operatorname{curl}}
\newcommand{\TheTitle}{Exponentially convergent data assimilation algorithm for Navier-Stokes equations}
\newcommand{\ShortTitle}{Data assimilation for NSE}
\newcommand{\TheAuthors}{Jason Frank, Tigran Tchrakian, Sergiy Zhuk}
\headers{\ShortTitle}{\TheAuthors}
\title{{\TheTitle}\thanks{Published in Proceedings of American Control Conference, 2017, DOI: 10.23919/ACC.2017.7963448
}}
\author{Jason Frank\thanks{Utrecht University, The Netherlands, \email{j.e.frank@uu.nl}
}
\and Tigran T. Tchrakian\thanks{IBM Research, Dublin, Ireland, \email{tigran@ie.ibm.com}
}
\and Sergiy Zhuk\thanks{IBM Research, Dublin, Ireland, \email{sergiy.zhuk@ie.ibm.com}}
}
\begin{document}
\maketitle
\begin{abstract}
The paper presents a new state estimation algorithm for a bilinear equation representing the Fourier-Galerkin (FG) approximation of the Navier-Stokes (NS) equations on a torus in $\mathbb{R}^2$. This state equation is subject to uncertain but bounded noise in the input (Kolmogorov forcing) and initial conditions, and its output is incomplete and contains bounded noise. The algorithm designs a time-dependent gain such that the estimation error converges to zero exponentially. The sufficient condition for the existence of the gain are formulated in the form of algebraic Riccati equations. To demonstrate the results we apply the proposed algorithm to the reconstruction a chaotic fluid flow from incomplete and noisy data.
\end{abstract}

\section{Introduction}
\label{sec:introduction}
Data Assimilation (DA) improves the accuracy of forecasts, provided by physical models, and evaluates their reliability by optimally combining \emph{a priori} knowledge encoded in equations of mathematical physics with \emph{a posteriori} information in the form of sensor data (state estimation). Mathematically, DA relies upon optimal control methods or applied probability. In the probabilistic framework, the state estimation problem is solved by means of the stochastic filtering algorithm. Namely, evolution of the conditional density of the states of a Markov diffusion process is described by a stochastic PDE, the so-called Kushner-Stratonovich (KS) equation~\cite{GihmanSkorokhod1997}.

Deterministic state estimators, including the algorithm presented in this paper, assume that errors have bounded energy and belong to a given bounding set. The state estimate is then defined as a minimax center of the reachability set, a set of all states of the physical model which are reachable from the given set of initial conditions and are compatible with observations. Dynamics of the minimax center is described by a minimax filter. The latter may be constructed by using dynamic programming, i.e., the set $V\le 1$, where $V$ is the so-called value function $V$ solving a Hamilton-Jacobi-Bellman (HJB) equation~\cite{Bardi1997}, coincides with the reachability set~\cite{Baras1995}. Statistically, the uncertainty description in the form of a bounding set represents the case of uniformly distributed bounded errors in contrast to stochastic filtering, where all the errors are usually assumed to be in the form of ``white noise''. However, in many cases (e.g. linear dynamics and ellipsoidal uncertainty description) $\exp\{-V\}$, coincides with the solution of the KS equation. Moreover, the solution of the HJB equation, $V$, may be represented as a non-negative quadratic form, and by computing the exponential of $-V$, one obtains the conditional density of the linear Markov process which also solves KS equation. In fact, the inverse of the Riccati operator coinsides with the state error covariance matrix of the Kalman-Bucy filter. In the nonlinear case the link between deterministic state estimation and stochastic filtering becomes less obvious. 

For generic nonlinear models both minimax and stochastic filters are infinite-dimensional: to get an optimal estimate one needs to solve a PDE (either the KS or HJB equation) in $\R^n$. Hence, if the state space of the original physical model is high-dimensional (e.g., a model representing the FG approximation of Navier-Stokes equations in 2D) then both filters become computationally intractable due to the ``curse of dimensionality''. Tractable approximations of optimal filters are briefly reviewed below. An overview of modern data assimilation methods is given in~\cite{ReichDA2015,StuartDA2015}.

The most popular approximations of optimal filters include the Extended Kalman Filter (ExKF), the Ensemble Kalman Filter (EnKF) and Luenberger/high gain observers. ExKF is based on the following idea: given an accurate estimate of the state at time instant $t$, one ``linearizes the dynamics'' around that estimate and applies Kalman filtering for the resulting linear system to obtain an estimate for the next time step. This procedure is then repeated. The major drawback of ExKF is that it may diverge for nonlinear equations with positive Lyapunov exponents. A computational bottleneck associated with ExKF is the requirement to recompute the state error covariance matrix, the gain. The EnKF overcomes this issue by generating an ensemble of trajectories and by computing the ensemble variance to approximate the gain. The latter is then used to compute a state estimate in the same way as in the Kalman filter, i.e., it uses a standard formula that allows one to obtain the distribution of a Gaussian random variable $\eta$ given a realizations of $\xi$, provided $\eta$ and $\xi$ have joint Gaussian distribution. A so-called asymptotic observers or Luenberger observers do not require optimal gain matrices (e.g. Riccati matrices). Instead, the gain is chosen so that the dynamics of the estimation error is described by an asymptotically stable linear (Luenberger observers) or non-linear (high-gain observers) ODE, so that the estimation error associated with the corresponding state estimator asymptotically approaches zero.

In this paper we design an exponentially convergent state estimator for a so called vorticity equation, the vorticity-streamfunction formulation of the Navier-Stokes (NS) equations in two spatial dimensions~\cite{MajdaBertozzi2002}. The vorticity equation is subject to uncertain but bounded noise in the input (Kolmogorov forcing) and initial conditions, and its output is incomplete and contains bounded noise. Assuming periodic boundary conditions, we apply Fourier-Galerkin (FG) approximation, i.e., we project the vorticity equation onto a $2N+1$-dimensional subspace generated by $\{e^{ikx}e^{isy}\}_{|k|,|s|\le \frac N2}$ and obtain an ODE for the projection coefficients, a FG model (see Section~\ref{sec:euler}). Note that Fourier-Galerkin approximation possesses a spectral convergence rate provided the solution of the vorticity equation is smooth~\cite{BardosTadmor2015}. 

Design of our state estimator relies upon the following ``key observation'': the bilinear convective operator of the vorticity equation is skew-symmetric. The same holds true for the bilinear term in the FG model which represents the FG discretization of the convective operator in the FG model. This fact allows us to show that the dynamics of the Euclidian norm of the estimation error is, in fact, independent of the bilinear convective term. This, in turn, is used to construct a time-dependent gain for the state estimator such that the estimation error converges to zero asymptotically. In the general case of noisy outputs the gain is constructed as a solution of a non-stationary algebraic Riccati inequality which reduces to a Linear Matrix Inequality (LMI) provided the output is exact. As a result, it is sufficient to solve an algebraic matrix Riccati inequality to get the exponential convergence for the corresponding state estimator. For the LMI case we use the least-squares solution of the corresponding algebraic Lyapunov equation (in continuous time). The latter allows us to introduce sufficient conditions for the detectability of the FG model: the real spectrum of the residual of the algebraic Lyapunov equation (in continuous time) must belong to $(-\infty,0)$ (see Section~\ref{sec:discretization}).  The numerical study demonstrates that in some cases the estimation error converges to zero even though the proposed detectability conditions are not fulfilled (see Section~\ref{sec:num-example}).

To the best of our knowledge, this result is new and easily generalizes to generic bilinear equations with skew-symmetric nonlinearity (e.g. Lorenz 96 model, Burgers equations). Recently, a few fully justified estimators for bilinear equations have appeared in the literature: an ellipsoidal state estimator~\cite{FillipovaQS2008}, a 3DVAR algorithm for the incompressible Navier-Stokes equations in 2D~\cite{Stuart3DVAR2013}, and the minimax filter for the Euler equations in 2D~\cite{ZhukTTCDC15}. The first algorithm is based on ellipsoidal calculus~\cite{Kurzhanski1997} and approximates the reachability set by an ellipsoid. This approach is reported to work well in small dimensions~\cite{FillipovaQS2008} and tends to overestimate the reachability set when the dimension of the ODE increases. The 3DVAR algorithm has been justified for Navier-Stokes equations on a torus in 2D and represents the most basic form of the filter which does not account for the model error and does not update the state error covariance matrix. The latter is very attractive from the computational stand-point as the stationary error covariance matrix is not expensive to propagate. However, the quality of the state estimate provided by 3DVAR strongly depends upon the choice of the covariance matrix. Finally, the minimax filter of~\cite{ZhukTTCDC15} uses a different gain design which is more expensive computationally and does not guarantee the exponential convergence. A similar approach has been used to design data assimilation algorithms for bilinear traffic flow models~\cite{ZhukITS14}. Adaptive parameter estimators for hyperbolic equations were considered for instance in~\cite{Demetriou1998}.

\section{Mathematical preliminaries}
\label{sec:euler}

\paragraph{Notation} Let $\Omega:=(0,L_x)\times(0,L_y)$ denote a rectangle with boundary $\partial\Omega$, and $\vec{n}(x,y)$ is a unit vector pointing outside $\Omega$ such that $\vec{n}(x,y)\perp\partial\Omega$, $\Omega_T:=\Omega\times(0,T)$. $C^s(\Omega)$ denotes a space of continuously differentiable functions on $\Omega$ (up to order $s$), $\Lt$ is the space of square-integrable functions on $\Omega$ with inner product $(f,g)_{\Lt}:=\frac1{L_xL_y}\int_\Omega f(x,y)\overline{g}(x,y)dxdy$, $\overline{g}$ is the complex conjugate of $g$, $x\cdot y$ is the canonical inner product of vectors $x,y$, $\Psi^\top$ is the transposed matrix, $\Psi^\star$ is the complex conjugate of $\Psi^\top$, $(x,y)_{C^n}:=x\cdot{\bar y}$ for complex vectors $x,y$ with $n$-components. $H^1(\Omega)$ is a Sobolev space of $L^2(\Omega)$-functions with weak first derivatives of $L^2(\Omega)$-class. $L^2(t_0,t_1,H):=\{f:f(t)\in H\text{ and } \int_{t_0}^{t_1} \| f(t)\|^2_H dt <+\infty\}$. We write $u=v$ a.e. on $\Omega$ if $u(x)=v(x)$ for almost all $x\in\Omega$.

Set $\div(\u)=\partial_{x_1}u_1+\partial_{x_2}u_2$, $\curl(\u)=\partial_{x_1}u_2-\partial_{x_2}u_1$, $\nabla u = (\partial_{x_1}u,\partial_{x_2}u)^\top$, $\nabla^\perp u = (-\partial_{x_2}u,\partial_{x_1}u)^\top$.
Given a vector-function $\u$, define \(
b(\u,w,v):=(\u\cdot\nabla w,\overline{v})_{\Lt}
\) and set $a(\psi,\phi) = (\nabla\psi,\nabla\phi)_{\Lt}$. Define $\phi_c(x):=e^{\frac{2\pi i cx}{L_x}}$, $\phi_d(y):=e^{\frac{2\pi i dy}{L_y}}$ and set $\phi_{cd}(x,y):=\phi_c(x)\phi_d(y)$. $e_j$ denotes the $j$th canonical basis vector in $\R^n$. Finally, let $\lambda_{cd}:=\frac{4\pi^2 c^2}{L_x^2} + \frac{4\pi^2 d^2}{L_y^2}$, provided $c^2+d^2>0$ and $\lambda_{cd}=0$ for the case $c^2+d^2=0$.

\paragraph{Fourier-Galerkin model for Navier-Stokes equations} Assume that $\omega$ verifies the weak vorticity-streamfunction formulation of the Navier-Stokes equations:
\begin{equation}
 \label{eq:vorticity}
 \begin{split}
 \dfrac{d}{dt}&(\omega,\phi)_{\Lt} + b(\u+\nabla^\perp\psi,\omega,\phi)+\nu a(\omega,\phi)=(\mathcal{D}f,\phi)_{\Lt}\,,(x,t)\in\Omega_T\,,\\
 &a(\psi,\phi) = (\omega,\phi)_{\Lt}\,,\omega(0)=\curl(\u_0)\,,
\end{split}
\end{equation}
where $\u = (\tilde u,\tilde v)^\top$ is a given vector representing the mean velocity field, $\nu>0$ is the diffusion coefficient, $\u_0\in C^2(\Omega)^2$ is the initial velocity and $f\in C^1(\Omega_T)$ has zero mean, $\int_\Omega f(x,t) dx =0$, $\mathcal D$ is a given bounded linear operator in $\Lt$.\\
Note that the weak formulation~\eqref{eq:vorticity} encapsulates various boundary conditions. Indeed, recall that according to Green's formula, one has: \begin{equation}
a(\omega,\phi) = -(\Delta\omega,\phi)_{\Lt} + (\nabla\omega\cdot\vec{n},\phi)_{L^2(\partial\Omega)}\,.
\end{equation} It then follows that $(\nabla\omega\cdot\vec{n},\phi)_{L^2(\partial\Omega)}=0$
in the following cases:
\begin{itemize}
\item periodic boundary conditions: $\u_0$, $\omega$, $\phi$, $\phi_x$, $\phi_y$ are $1$-periodic vector-functions
\item homogeneous Dirichlet boundary conditions: $\u_0=0$ on $\partial\Omega$ and $\phi=0$ on $\partial\Omega$
\item homogeneous Neumann boundary conditions: no constraints on $\phi$, $\nabla\omega\cdot\vec{n}=0$ on $\partial\Omega$
\end{itemize}
In what follows we will be working with the case of periodic boundary conditions. By using the same argument as in~\cite[p.254]{Temam2001} it is not hard to prove that in this case there exists the unique solution $\omega\in C(0,T,H^1(\Omega))$ of~\eqref{eq:vorticity}, provided the initial condition is from $H^1(\Omega)$ and $f\in L^2(0,T,\Lt)$.

Analogously to~\cite{ZhukTTCDC15} we introduce a finite dimensional FG model for~\eqref{eq:vorticity}. We recall that $\{\phi_{cd}\}_{c,d\in \mathbb{Z}}$ is a total orthonormal system in $\Lt$:  $(\phi_{cd},\phi_{pq})_{\Lt}=\delta_{cp}\delta_{dq}$. In what follows we will use a simplified notation for double indicies, for instance $\{\phi_{cd}\}_{N}$ will refer to the vector $\{\phi_{cd}\}_{|c|\le\frac{N_1}2,|d|\le\frac{N_2}2}$, and $\{(\mathcal D\phi_{cd},\phi_{pq})_\Lt\}_{N,N}$ will refer to the matrix $\{(\mathcal D\phi_{cd},\phi_{pq})_\Lt\}_{|c|,|p|\le\frac{N_1}2,|d|,|q|\le\frac{N_2}2}$, where $N:=(N_1+1)(N_2+1)$. Define a linear $N$-dimensional subspace $L^N:=\operatorname{Lin}\{\phi_{cd}\}_{N}\subset\Lt$, and set \begin{equation}
\omega_N(x,y,t):=\sum_{|s_i|\le \frac{N_i}2} \omega_{s_1,s_2}(t) \phi_{s_1,s_2}(x,y)\,,\quad \omega_{cd}(t):=(\omega(t),\phi_{cd})_{\Lt}
\end{equation} with $\omega_{0,0}:=0$ (so that $\omega_N$ has zero mean). Clearly, $\omega_N$ is the projection of $\omega$ onto $L^N$. To approximate the projection coefficients $\omega_{cd}$ we restrict~\eqref{eq:vorticity} to $L^N$, i.e. we let $\phi$ run through $\{\phi_{cd}\}_{N}$, and substitute $\omega$ with $\omega_N$ in the resulting finite system of differential equations. We get the following FG model: \begin{equation}
\begin{split}
&\dot\omega_{cd} + b(\bar u + \nabla^\perp\psi_N,\omega_N,\phi_{cd})+\nu a(\omega_N,\phi_{cd})
=(\mathcal D f, \phi_{cd})_{\Lt},\\
&a(\psi_N,\phi_{cd}) = (\omega_N,\phi_{cd})_{\Lt}\,.
  \end{split}
\end{equation}
By using the orthogonality of $\{\phi_{cd}\}_{c,d\in \mathbb{Z}}$, we arrive at the following ODE: \begin{equation}
\begin{split}
  \dot \omega_{cd}(t)=&- \sum_{p,q,n,m}\frac{\omega_{pq}\omega_{nm}(pm-qn)L_xL_y \delta_{p+n,c} \delta_{q+m,d}}{p^2 L_y^2+q^2 L_x^2}\\
& -  \omega_{cd} (\frac{2\pi ic \tilde u}{L_x} - \frac{2\pi id\tilde v}{L_y})
- \nu \omega_{cd}(t) \bigl(\frac{4\pi^2 c^2}{L_x^2} + \frac{4\pi^2 d^2}{L_y^2}\bigr)\\
& + \sum_{n,m}(\mathcal D \phi_{mn}, \phi_{cd})_{\Lt}f_{mn}
\end{split}
\end{equation}
or, in the vector form,
\begin{equation}
  \label{eq:FGmodel}
\dfrac{d\w}{dt} = B(\w)\w + B(\u) \w + A\w + D\f\,, \w(0)=\w_0\,,
\end{equation}
where $\w:=\{\omega_{cd}\}_{N}$ is the vector of projection coefficients representing $\omega$ in $L^N$, $\w_0$, $\f$ represent $\omega(\cdot,\cdot,0)$ and $f$ in $L^N$, $A:=-\nu\operatorname{diag}(\lambda_{-\frac{N_1}2,-\frac{N_1}2}\dots \lambda_{\frac{N_1}2,\frac{N_1}2})$ represents the Laplacian $\Delta$ in its eigen-subspace $L^N$, and \begin{equation}
B(\w) = \bigl\{-\sum_{p,q}\frac{\omega_{pq}(pm-qn)L_xL_y}{p^2 L_y^2+q^2 L_x^2}\delta_{p+n,c} \delta_{q+m,d}\bigr\}_{N,N}\,,
\end{equation} represents the convection operator induced by the trilinear form $b$ (see \cite[p.279]{Temam2001}) in $L^N$. Finally, $D:=\{(\mathcal{D}\phi_{cd},\phi_{pq})_{\Lt}\}_{N,N}$.


\textbf{Complex conjugacy of $\w$.} Note that $\w = \Psi \Psi^\star\w$ provided $\Psi$ is a $(N_1+1)(N_2+1)$ projection matrix defined by: $\Psi=\frac1{\sqrt{2}}\left(
  \begin{smallmatrix}
    I_n&0_{n,1}&J_n\\
    0_{1,n}&0_{1,1}&0_{1,n}\\
    J_n&0_{n,1}&-I_{n}
  \end{smallmatrix}
\right)$, where $I_n$ and $J_n$ are respectively the identity and row-reversed identity matrices of size $n=(N_1+1)(N2+1)$, and $0_{a,b}$ represent zero matrices of size $a \times b$, i.e. $\w_{-\frac{N_1}2+k,-\frac{N_2}2+s}=\overline{\w}_{\frac{N_1}2-k,\frac{N_2}2-s}$ for any $0\le k,s\le \frac{N_1}2,\frac{N_2}2$ and $\w_{0,0}=0$.

\textbf{Skew-symmetry of the bilinear term.} Assume that $\div(\u)=0$ and $\u,w,v$ are smooth $1$-periodic functions on $\Omega$. We find integrating by parts that the trilinear form $b$ is skew-symmetric:
\begin{equation}
  \label{eq:trilin_form_wort}
  b(\u,w,v)=-b(\u,v,w)\,.
\end{equation}
Hence, the convection operator induced by $b$ is skew-symmetric too, and, as a result, the $\Lt$-norm of the vorticity, the enstrophy is not increasing, provided $D=0$, and is conserved if, in addition, $A=0$. This implies that (i) $B(\w)=-B^\star(\w)$ as $b(\u,\phi_{cd},\phi_{pq})=-b(\u,\phi_{pq},\phi_{cd})$ by~\eqref{eq:trilin_form_wort} so that $B(\vec w)$ is a skew-symmetric matrix, and (ii) $B(\vec u)$ is a diagonal matrix. By using the skew-symmetry of $B$ it is not hard to prove the unique solvability for~\eqref{eq:FGmodel} from any initial condition and for any $L^\infty$-input $\f$. Indeed, it is sufficient to take the inner product of both sides of~\eqref{eq:FGmodel} with the complex conjugate of $\w$, bound $(D\f,\w)_{C^N}$ by Schwartz inequality, recall that $(Q\f,\f)_{C^N}<1$ and use Bellman lemma to get a bound on the norm of $\w$.

\section{Problem statement}
\label{sec:problem-statement}
Assume that $\u=0$ and let $\w$ solve
\begin{equation}
  \label{eq:state}
  \dfrac{d\w}{dt} = B(\w)\w + A\w + D\f\,,\quad \w(0)=\w_0\,,
\end{equation}
and assume that a vector-function $\y$ is observed in the following form:
\begin{equation}
  \label{eq:obs}
\y(t) = H\w(t) + F\e(t)\,,
\end{equation}
where $H = \{(H_{cd},\phi_{s_1,s_2})_{\Lt}\}_{M,N}$, $H_{cd}$ is an averaging kernel (e.g. a smooth function with compact support in a vicinity of a grid point $x_{cd}$) and $\e=\{\eta_{cd}\}_{M}$ is a measurable vector-function modelling noise in the output, $F$ is a given matrix.

We further assume that the tuple $(\w_0,\f,\e)$ is an uncertain element of the following $L^\infty$-type ellipsoid:
\begin{equation}
\begin{split}
\E:=\{\w_0:(S^{-1} \w_0)\cdot\w_0\le 1\}\times
\{(\f,\e): Q^{-1}(t)\f(t)\cdot\f(t) + R^{-1}(t) \e(t)\cdot \e(t) \le 1\}\,.
\end{split}
\end{equation}
where $Q$, $R$ and $S$ are given positive definite matrices of appropriate dimensions. Given $(\w_0,\f,\e)\in\E$, $\w(\cdot;\w_0,\f)$ refers to the unique solution of~\eqref{eq:state}, which corresponds to $\w_0$ and $\f$, and $\y(\cdot;\e)$ refers to $\y$ which corresponds to $\w(\cdot;\w_0,\f)$ and $\e$ through~\eqref{eq:obs}.

We say that $\hw$ is an estimate of $\w$ in the form of a filter if $\hw$ solves the following equation:
\begin{equation}
  \label{eq:filter}
  \dfrac{d\hw}{dt} = B(\hw)\hw + A\hw + PH^\top(\y-H\hw)\,, \quad \hw(0)=0\,,
\end{equation}
for a symmetric matrix-valued function $P(t)$, the gain. We will write $\hw(\cdot;\y,P)$ to stress the dependence of $\hw$ on $\y$ and $P$. Define the estimation error $e:=\w(t;\w_0,\f)-\hw(t;\y(\cdot;\e),P)$ and set:
\begin{equation}
  \label{eq:error}
\sigma(t;\w_0,\f,\e,\y,P):=(e,e)_{C^N}\,.
\end{equation}
In what follows, most of the time we will be using a simplified notation, e.g., $\sigma(t)$ or $\sigma$ instead of $\sigma(t;\w_0,\f,\e,\y,P)$, or $\hw$ instead of $\hw(\cdot;\y,P)$. 

\textbf{Our goal is}, given $\varepsilon>0$, to find a symmetric $P(t)$ such that
\begin{equation}
  \label{eq:goal}
\max_{(\w_0,\f,\e)\in \E} \sigma(t;\w_0,\f,\e,\y,P) \le  \varepsilon\,,\forall t>t^*>0\,.
\end{equation}

\section{Main results}
\label{sec:main-results}

In this section we present sufficient conditions for~\eqref{eq:goal} to hold, namely an algebraic Riccati inequality with time-dependent matrix coefficients $P$ which ensures the exponential decay of the estimation error $\sigma$ for the generic $L^\infty$-type uncertainty description. As a conjecture, we suggest that $\sigma$ coincides with a solution of a HJB equation along the trajectories of~\eqref{eq:state}. Next, we propose a computationally feasible version of the aforementioned sufficient conditions, namely a linear matrix inequality for $P$, which enforces~\eqref{eq:goal}, provided $F=0$.
\begin{theorem}[$L^\infty$-type uncertainty]\label{t:Linfty}
Let $q>0$ and define $B_1(\hw) = \bigl(\begin{smallmatrix} B(\Psi e_1) \hw\dots B(\Psi e_{(N_1+1)(N_2+1)}) \w
\end{smallmatrix}\bigr)\Psi^\star$. If $P$ verifies the following matrix inequality:
\begin{equation}
  \label{eq:ARE}
\begin{split}
  B_1(\hw) + B_1^\star(\hw) + DQ D^\star - PH^\star H - H^\star HP + PH^\star F R F^\star H P < -q I\,,
\end{split}
\end{equation}
then
\begin{equation}
\begin{split}
  \label{eq:error_est}
  \max_{(\w_0,\f,\e)\in \E} \sigma(t;\w_0,\f,\e,\y,P) \le C_1(q)+C_2(q) e^{-(2|\lambda(A)|+q)t}\,,
\end{split}
\end{equation}
where $C_1(q):=\frac{1}{2|\lambda(A)|+q}$, $C_2(q):=\lambda(S)-\frac{1}{2|\lambda(A)|+q}$, $\lambda(X)$ denotes the maximal eigenvalue of the matrix $X$.
\end{theorem}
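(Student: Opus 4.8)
The plan is to turn $\sigma(t)=(e,e)_{C^N}$ into a scalar differential inequality and integrate it. First I would write down the error dynamics. Since $\hw(0)=0$ and $\y-H\hw=He+F\e$, subtracting~\eqref{eq:filter} from~\eqref{eq:state} gives
\[
\dot e = \big(B(\w)\w-B(\hw)\hw\big)+Ae+D\f-PH^\star(He+F\e),
\]
so that $\tfrac{d}{dt}\sigma=2\,\operatorname{Re}(\dot e,e)_{C^N}$, and the entire argument reduces to bounding the right-hand side term by term.

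The decisive step is the bilinear term. Using bilinearity of $B$ I would split the difference as $B(\w)\w-B(\hw)\hw=B(\w)e+B(e)\hw$; the particular orientation of this splitting (rather than $B(e)\w+B(\hw)e$) is exactly what makes the argument work, because it isolates the \emph{computable} state $\hw$ in the surviving term. By the skew-symmetry $B(\w)=-B^\star(\w)$ established in Section~\ref{sec:euler}, one has $\operatorname{Re}(B(\w)e,e)_{C^N}=0$, so the convective coupling with the unknown true state drops out of $\dot\sigma$ entirely. For the remaining piece I would use $e=\Psi\Psi^\star e$ together with linearity of $B$ in its first argument to identify $B(e)\hw=B_1(\hw)e$, whence $2\,\operatorname{Re}(B(e)\hw,e)_{C^N}=e^\star\big(B_1(\hw)+B_1^\star(\hw)\big)e$, which is precisely the Hermitian block appearing in~\eqref{eq:ARE}.

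Next I would dispose of the remaining contributions. The diffusion term yields $2(Ae,e)_{C^N}\le 2\lambda(A)\sigma=-2|\lambda(A)|\sigma$ since $A$ is negative definite. The two noise terms I would estimate by weighted Young inequalities matched to the ellipsoid $\E$, namely $2\,\operatorname{Re}(D\f,e)_{C^N}\le e^\star DQD^\star e+\f^\star Q^{-1}\f$ and $-2\,\operatorname{Re}(PH^\star F\e,e)_{C^N}\le e^\star PH^\star FRF^\star HPe+\e^\star R^{-1}\e$, while the exact-observation part contributes $-e^\star(PH^\star H+H^\star HP)e$. Collecting the Hermitian quadratic forms, the matrix sandwiched in $e^\star(\cdot)e$ is exactly the left-hand side of~\eqref{eq:ARE} and is therefore $<-qI$; and along any admissible $(\w_0,\f,\e)\in\E$ the constraint gives $\f^\star Q^{-1}\f+\e^\star R^{-1}\e\le 1$ pointwise in $t$. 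This produces
\[
\dot\sigma\le -(2|\lambda(A)|+q)\,\sigma+1.
\]

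Finally I would integrate via Grönwall's comparison lemma. Writing $\alpha:=2|\lambda(A)|+q$, this gives $\sigma(t)\le \tfrac1\alpha+\big(\sigma(0)-\tfrac1\alpha\big)e^{-\alpha t}$, and since $\hw(0)=0$ and $S^{-1}\w_0\cdot\w_0\le1$ on $\E$ imply $\sigma(0)=\|\w_0\|^2\le\lambda(S)$, the bound~\eqref{eq:error_est} follows with the stated $C_1(q)$ and $C_2(q)$ after maximizing over $\E$. The main obstacle I anticipate is the algebra of the bilinear term: orienting the splitting so that skew-symmetry annihilates the unknown-state coupling while the computable remainder collapses cleanly to $B_1(\hw)$, together with careful bookkeeping of the complex inner product $(\cdot,\cdot)_{C^N}$ and the conjugate-transpose operations (in particular treating the filter's $H^\top$ as $H^\star$, consistently with~\eqref{eq:ARE}). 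Once that reduction is in place, the completion-of-squares and Grönwall steps are routine.
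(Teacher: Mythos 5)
Your proposal is correct and follows essentially the same route as the paper's proof: the same oriented splitting $B(\w)\w-B(\hw)\hw=B(\w)e+B(e)\hw$ with skew-symmetry annihilating $(B(\w)e,e)_{C^N}$, the identification $B(e)\hw=B_1(\hw)e$ via $e=\Psi\Psi^\star e$, absorption of the quadratic forms into the Riccati inequality~\eqref{eq:ARE}, and a Gr\"onwall/Bellman integration with $\sigma(0)\le\lambda(S)$. Your only deviation is cosmetic: you bound the noise terms by two separate weighted Young inequalities plus the ellipsoid constraint, whereas the paper applies Schwarz jointly and then $2\sqrt{x}\le x+1$, which yields the identical differential inequality $\dot\sigma\le 1-(2|\lambda(A)|+q)\sigma$.
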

\begin{proof}
Take any $(\w_0,\f,\e)\in\E$ and let $\w(\cdot;\w_0,\f)$ be the corresponding unique solution of~\eqref{eq:state}, and  $\y(\cdot;\e)$ be the corresponding output. Suppose that $\hw(\cdot;\y,P)$ solves~\eqref{eq:filter} for this particular $\y(\cdot;\e)$ and a symmetric $P$. Recall~\eqref{eq:state}-\eqref{eq:filter}. We find that:
\begin{align*}
\dot e &= 
B(\w)e + B_1(\hw)e + Ae - PH^\star H e  + D\f - PH^\star F\e\,.
\end{align*}
Hence \begin{equation}
\begin{split}
  (e,\dot e)_{C^N}&=\frac12\dfrac{d}{dt}(e,e)_{C^N}\\
& = (e, (B(\w) + B_1(\hw) + A  - PH^\star H) e)_{C^N}\\
  & + (e, D\f - PH^\star F\e)_{C^N}\,.
\end{split}
\end{equation} Since $B(\w)=-B^\star(\w)$ it follows that $(e, B(\w)e)_{C^N}=0$, and we obtain the following equation:
\begin{equation}
  \label{eq:error_ode}
  \begin{split}
    \dot\sigma(t) &= (e, (B_1(\hw) + B_1^\star(\hw) - PH^\star H - H^\star HP)e)_{C^N}\\
    &+ 2(Ae,e)_{C^N}+ (2(e, Df - PH^\star Fg)_{C^N}\,.
  \end{split}
\end{equation}
It is easy to find by using the Schwarz inequality and the definition of $\E$ that:
\begin{equation}
  \label{eq:eDf}
\begin{split}
  2&(e, Df - PH^\star Fg)_{C^N}\\
&\le2(\left( D Q D^\star + PH^\star F R F^\star H P\right)e,e)_{C^N}^\frac12\\
&\le (DQ D^\star e,e)_{C^N} + (PH^\star F R F^\star H P e,e)_{C^N} +1\,.
\end{split}
\end{equation}
Assume now that $P$ solves \eqref{eq:ARE} and recall that $A\le 0$. It then follows from~\eqref{eq:error_ode}-\eqref{eq:eDf} that \begin{equation}
\dot\sigma < 1 + (e, (2A - qI)e)\le 1 - (2|\lambda(A)|+q) \sigma\,.
\end{equation} Define $v(t):=\sigma(t)-\frac{1}{2|\lambda(A)|+q}$. We have that $\dot v< -(2|\Lambda(A)|+q)v(t)$ and so, by Bellman lemma, $v(t)\le v(0)e^{-(2|\alpha|+q) t}$. Hence, it follows that \begin{equation}
\sigma(t;x_0,f,g,y,P)\le C_1+e^{-(2|\alpha|+q) t} (\sigma(0) -\frac{1}{2|\lambda(A)|+q})\,.
\end{equation} Combining this with that $\sigma(0) = (\w(0),\w(0))$, and $(S^{-1}x(0),x(0))_{C^N}\le 1$, and by noting that \begin{equation}
\max_{z:(S^{-1}z,z)_{C^N}\le 1}(z,z) = \max_{l:(l,l)=1} (Sl,l)^2 = \lambda(S)\,
\end{equation} we obtain~\eqref{eq:error_est}.
\end{proof}
It is not hard to see that for any $\varepsilon>0$ one can find $q>0$ such that \begin{equation}
C_1(q) + C_2(q)e^{-(2|\lambda(A)|+q)}<\varepsilon\,.
\end{equation} The structure of the equation~\eqref{eq:error_ode} suggests the following conjecture:
\begin{Conjecture}
Let \begin{equation}
\begin{split}
  X_T:= \{z: &\exists (\w_0,\f,\e)\in \E\text{ and } 0\le t^*\le T\text{ such that:}\\
  z &= \w(t^*)\\
   &\text{ and } \\
   &\dfrac{d\w}{dt} = B(\w)\w + A\w + D\f\,, \w(0)=\w_0\}\,.
\end{split}
\end{equation}
Assume that $V$ solves the following HJB equation:
\begin{equation}\label{eq:hjb}
\begin{split}
&\partial_t V = \frac14(\partial_x V, (A +A^\star) \partial_x V)\\
&+\frac14(\partial_x V,\left( B_1(\hw) + B_1^\star(\hw) - PH^\star H - H^\star HP \right)\partial_x V)\\
&+ \max_{(\f,\e)\in\E}(\partial_x V, D\f - PH^\star F\e)\\
&V(x,0) = (x,x)\,, \quad V(x,t) = (x,x) \text{ on } \partial X_T\,,
\end{split}
\end{equation}
Then \begin{equation}
\max_{\w_0,\f,\e}\sigma(t;\w_0,\f,\e,\y,P) = V(\w_*(t),t)\,.
\end{equation} where $\w_*$ corresponds to $\w_0$ and $\f$ at which the $\max$ above is attained.
\end{Conjecture}
Solvability conditions and numerical methods for~\eqref{eq:ARE} are known~\cite{care}. However, computing the numerical solution of \eqref{eq:ARE} in high dimensions is a very challenging problem. We stress that~\eqref{eq:ARE} simplifies to a LMI provided $F=0$:
\begin{corollary}[Exact output]
Let $F=0$, $q>0$ and assume that $P$ solves the following LMI $$
B_1(\hw) + B_1^\star(\hw) + DQ D^\star - PH^\star H - H^\star HP < -q I\,.
\eqno(LMI)
$$
Then~\eqref{eq:error_est} holds true.
\end{corollary}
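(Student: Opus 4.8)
The plan is to obtain this corollary as an immediate specialization of Theorem~\ref{t:Linfty} to the case $F=0$, since the displayed (LMI) is exactly the Riccati inequality~\eqref{eq:ARE} with the quadratic-in-$P$ term $PH^\star F R F^\star H P$ deleted. First I would record the one-line reduction: setting $F=0$ annihilates $PH^\star F R F^\star H P$, so that~\eqref{eq:ARE} collapses precisely to (LMI); hence any symmetric $P$ solving (LMI) solves~\eqref{eq:ARE} with $F=0$, and the bound~\eqref{eq:error_est} is inherited verbatim. This is the cleanest route, and strictly speaking it finishes the proof.

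For completeness I would then retrace the (now shorter) argument to confirm that nothing degenerates when $F$ vanishes. In the error dynamics $\dot e = B(\w)e + B_1(\hw)e + Ae - PH^\star H e + D\f - PH^\star F\e$ the measurement-noise term $PH^\star F\e$ drops out, so $\e$ no longer enters the evolution of $e$ at all. The skew-symmetry identity $B(\w)=-B^\star(\w)$ still forces $(e,B(\w)e)_{C^N}=0$, exactly as in the theorem, leaving
\begin{equation*}
\dot\sigma = (e,(B_1(\hw)+B_1^\star(\hw)-PH^\star H-H^\star HP)e)_{C^N} + 2(Ae,e)_{C^N} + 2(e,D\f)_{C^N}.
\end{equation*}

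Next I would bound the forcing term by the Schwarz inequality together with the constraint $Q^{-1}\f\cdot\f\le 1$ inherited from $\E$, giving $2(e,D\f)_{C^N}\le 2(DQD^\star e,e)_{C^N}^{1/2}\le (DQD^\star e,e)_{C^N}+1$, which is simply the $F=0$ instance of~\eqref{eq:eDf}. Invoking (LMI) to replace $B_1(\hw)+B_1^\star(\hw)-PH^\star H-H^\star HP+DQD^\star$ by $-qI$, and using $A\le 0$ with maximal eigenvalue $\lambda(A)\le 0$, yields $\dot\sigma < 1-(2|\lambda(A)|+q)\sigma$. The Bellman (Grönwall) lemma applied to $v(t):=\sigma(t)-\tfrac{1}{2|\lambda(A)|+q}$, combined with $\max_{(S^{-1}z,z)_{C^N}\le1}(z,z)=\lambda(S)$, then reproduces~\eqref{eq:error_est}.

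There is no genuine obstacle here; the only point worth verifying---and the whole purpose of the corollary---is that deleting $F$ leaves a true linear matrix inequality in $P$. The term $PH^\star F R F^\star H P$ was the sole source of quadratic dependence on $P$, whereas $-PH^\star H-H^\star HP$ is affine in $P$ and $B_1(\hw)+B_1^\star(\hw)+DQD^\star$ is $P$-independent. Thus (LMI) is affine in $P$ along the filter trajectory, delivering exactly the computational simplification the corollary advertises.
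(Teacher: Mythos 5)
Your proposal is correct and matches the paper's proof, which likewise disposes of the corollary in one line as a straightforward consequence of Theorem~\ref{t:Linfty}: setting $F=0$ kills the quadratic term $PH^\star F R F^\star H P$, so (LMI) is exactly~\eqref{eq:ARE} in this case. Your additional retracing of the error-dynamics argument is sound but redundant, since the theorem already covers $F=0$ verbatim.
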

\begin{proof}
  This is a straightforward consequence of the Theorem~\ref{t:Linfty}.
\end{proof}
\subsection{Computational form of the filter}
\label{sec:discretization}
The most straightforward approach of solving $(LMI)$ is to solve the following linear Lyapunov equation:
$PH^\star H + H^\star HP = W(t):=B_1(\hw(t)) + B_1^\star(\hw(t)) + DQ D^\star + qI$. The latter may not have the classical solution as its right hand side $W(t)$ may not belong to the range of the linear operator $P\mapsto V(P):=PH^\star H + H^\star HP$. On the other hand, one can always compute the least-squares solution of the linear equation $V(P)=W$. Indeed, this amounts to evaluating $\hat P(t):=(I\otimes H^\star H + H^\star H\otimes I)^+\operatorname{vec}(W(t))$, where $Q^+$ denotes the pseudoinverse of $Q$, and $\operatorname{vec}(P)$ is the vector formed by stacking the columns of $P$ one upon another. Note that the matrix $B_1(\hw) + B_1^\star(\hw)+qI$ is usually very sparse so that $\hat P_t$ can be effectively computed by using a standard least-squares solver (e.g. \textrm{GMRES}). For sparse matrices this approach appears to be more efficient than applying the standard SDP solvers to solve $(LMI)$ directly, especially in high dimensions.

We stress that $\hat P$ solves $(LMI)$ provided the real spectrum of the residual $W (t)-V(\hat P)$ belongs to $(-\infty,0)$ which gives us the pointvise \emph{detectability conditions}, i.e. the eigen-values of the projection of $W(t)$ onto the orthogonal completion of the range of $P\mapsto V(P)$, the ``unobservable'' eigenvalues of $W$, must be negative. On the other hand, $B_1$ is a linear function of $\hw$ and so does $\hat P$ defined . As a result, the filtering equation has a bilinear correction term $\hat P(\hw)H^\star H\hw$. To solve~\eqref{eq:filter} numerically one can use a modification of the algorithm proposed in~\cite{ZhukTTCDC15}: namely, define $J(\hw_t):=B(\hw_t) + A - \hat P_t H^\top H$, set $F_{t,t+t}:=\hat P_tH^\top\frac{\y_{t+1}+\y_t}{2}$ and compute $\hw_{t+1}$ given $\hw_t$ as follows: $\hw_0=0$ and
\begin{align}
&\frac{\hw_{t+1}-\hw_t}{dt} = J(\hw_t)\frac{\hw_{t+1}+\hw_{t}}2 + F_{t,t+1}\,,   \label{eq:filter_dt}\\
&\hat P_t = (I\otimes H^\star H + H^\star H\otimes I)^+\operatorname{vec}(W(t)) \label{eq:gain_dt}
\end{align}

\subsection{Numerical experiment}
\label{sec:num-example}

\textbf{Synthetic observations.} To generate observations we set $N_1=N_2=40$, $N=(N_1+1)(N_2+1)$ and compute the numerical solution of~\eqref{eq:state}, the ``true'' vorticity by using the numerical algorithm~\eqref{eq:filter_dt} with  $J(\hw_t):=B(\hw_t) + A$, $F_{t,t+1}=\frac{\f_{t+1}+\f_t}2$ and $\hw(0)=\w_0$, where $\w_0$ is the projection of $\omega(x,y,0)= 3(1-(x-\pi))^2e^{-(x-pi)^2- (y-\pi+1)^2}- 10(5^{-1}(x-\pi) - (x-\pi)^3 - (y-\pi)^5)e^{-(x-\pi)^2-(y-\pi)^2}- 3^{-1}e^{-(x-\pi+1)^2 - (y-\pi)^2}$ onto $L^N$. The timestep is taken to be $dt=0.025$ and $T=20$. The forcing $f$ is taken to be proportional to the sum of two basis functions, so that the vector matrix $D$ is a diagonal matrix with zero entries but two at positions $\frac{(N_1+1)N_1}{2}\pm |d|+\frac{N_2}2+1$, $d=6$. $\f$ is taken to be a constant vector with components equal to $d/2$. The diffusion coefficient is set to $\nu=0.005$ indicating a weak damping effect. 

\textbf{Estimation.} The filter is computed according to~\eqref{eq:filter_dt}-\eqref{eq:gain_dt}. $H$ is taken to be an identity matrix with only non-zero entries representing the following mode numbers: $(-6,-3:3,6)\times (-6,-3:3,6)$, $-3:3$ stands for $\{-3,-2,-1,0,1,2,3\}$. Hence, we observe just $81$ components of the $1681$-dimensional state vector $\w$. The initial condition for the filter is set to $0$ and the forcing $\f$, used to generate observations, is assumed to be unknown, $R:=I$, $F=0$ and $Q=2\|\f\|^{-1}_{C^N}I$. We set $q:=200\max(Q)$. The observed modes are subject to a small (upd to $10\%$ signal to noise ratio) random noise drawn from the uniform distribution over $(-.2/\sqrt{N},.2\sqrt{N})$. Figure~\ref{fig:estimates} shows the estimate and truth at different times. Figure~\ref{fig:rel_err} displays the dynamics of the relative estimation error over time. As noted, $(LMI)$ does not hold true, yet the error converges to $0$.
\begin{figure}
        \centering
        \begin{subfigure}[b]{0.5\textwidth}
                \includegraphics[width=\textwidth]{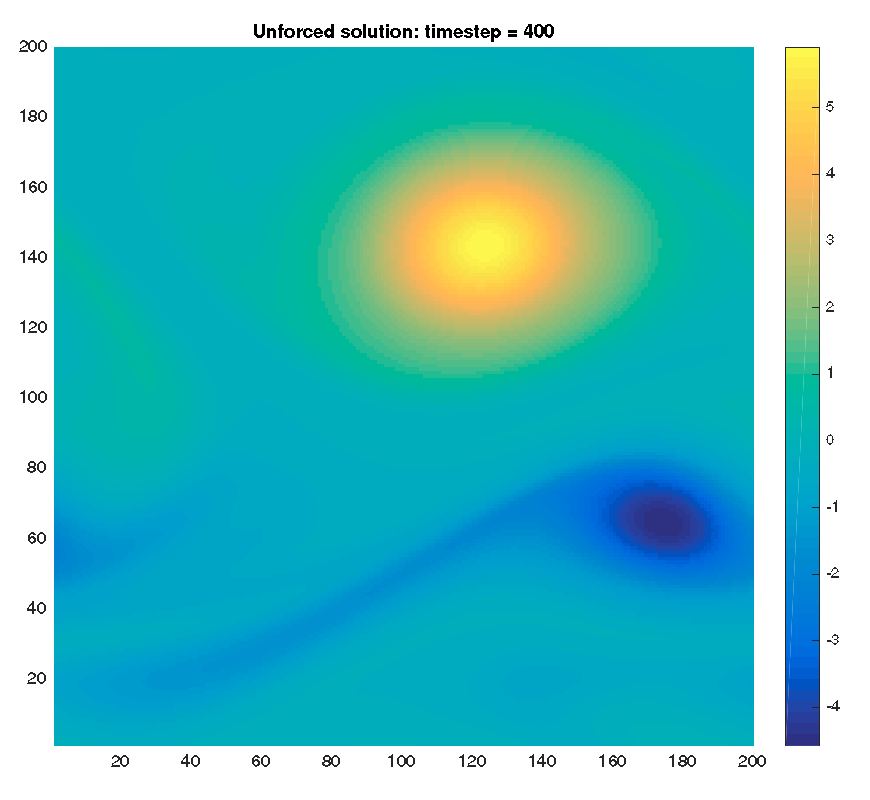}
                \caption{Unforced solution, $t=10$}
                \label{fig:estimate2}
        \end{subfigure}%
        ~ 
        \begin{subfigure}[b]{0.5\textwidth}
                \includegraphics[width=\textwidth]{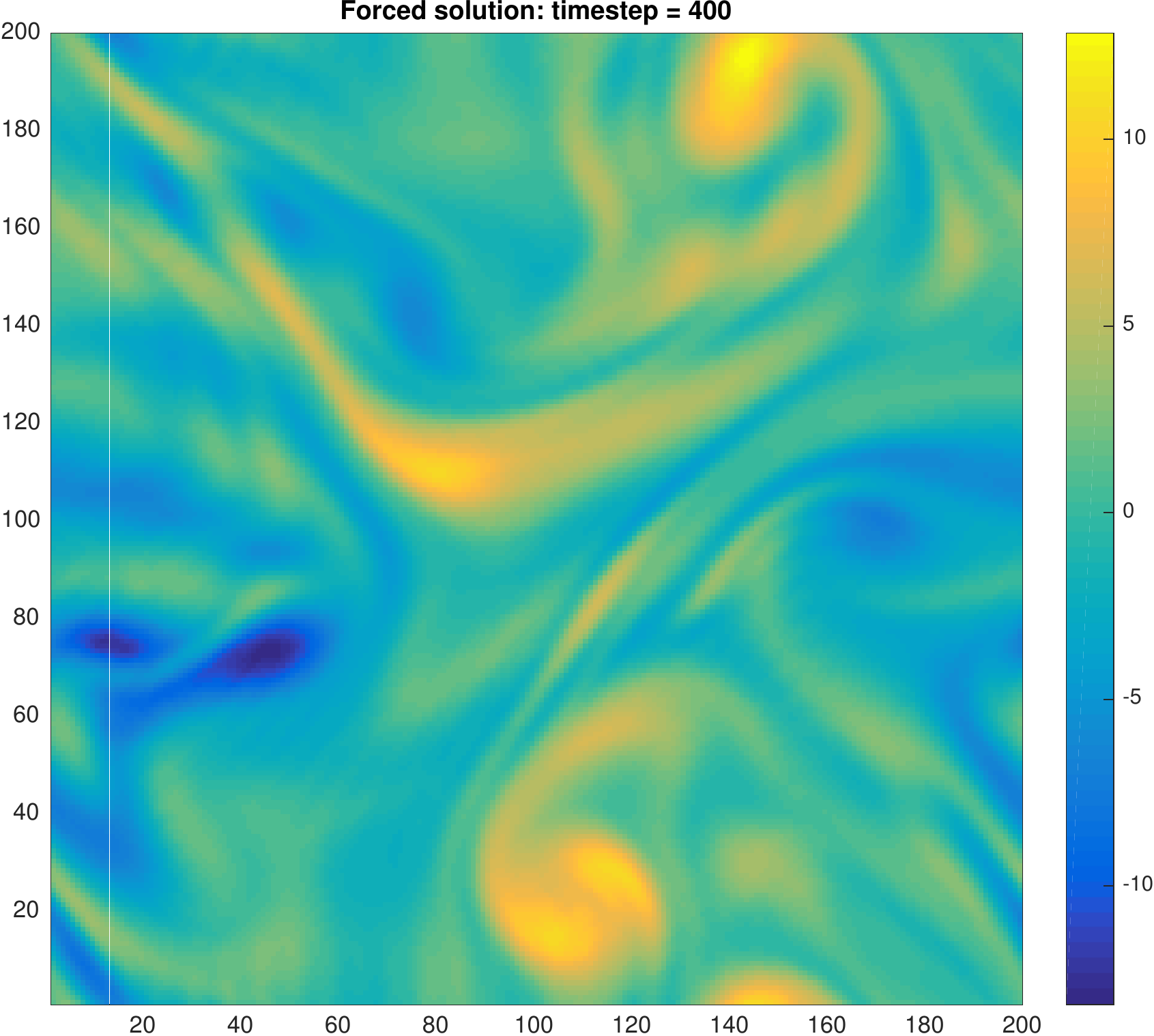}
                \caption{Forced solution, $t=10$}
                \label{fig:truth2}
        \end{subfigure}%
	\caption{The impact of forcing}\label{fig:1}
\end{figure}
\begin{figure}
        \centering
        \begin{subfigure}[b]{0.5\textwidth}
                \includegraphics[width=\textwidth]{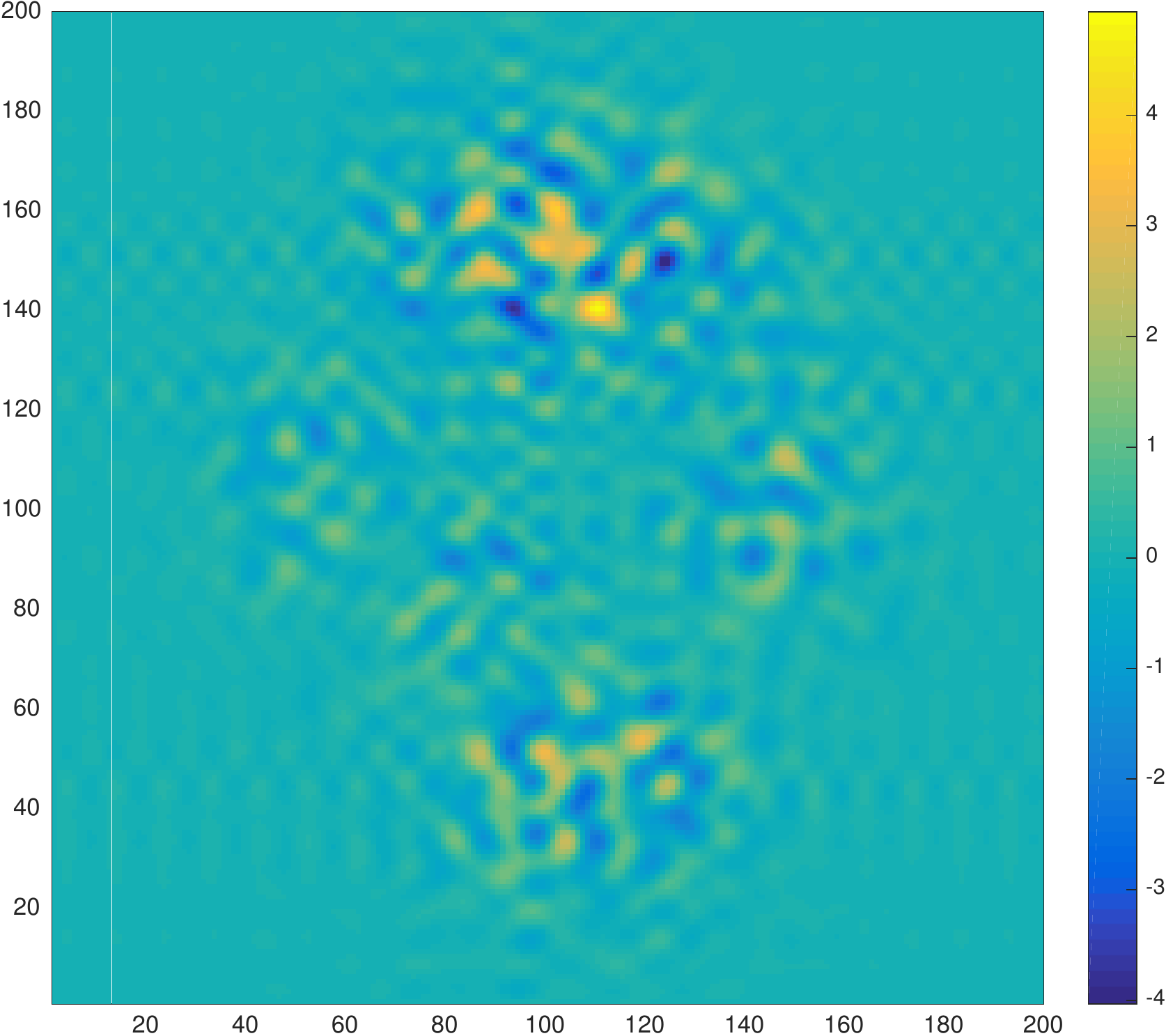}
                \caption{The ``true'' vorticity at time $t=0$}
                \label{fig:init}
        \end{subfigure}%
        ~ 
        \begin{subfigure}[b]{0.5\textwidth}
                \includegraphics[width=\textwidth]{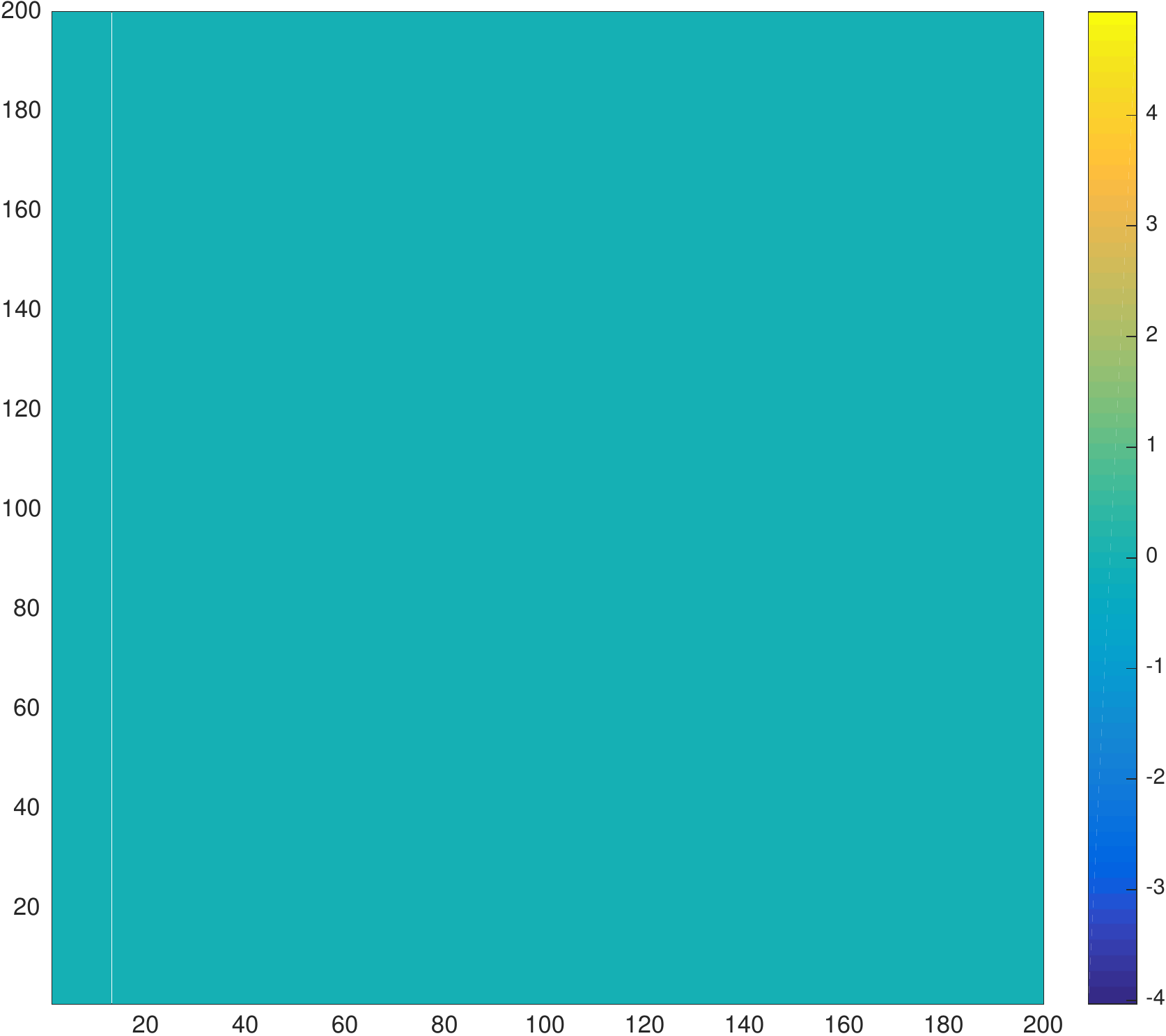}
                \caption{The estimate at time $t=0$, rel.err. = $100\%$}
                \label{fig:truth1}
        \end{subfigure}%
        ~ 

        \begin{subfigure}[b]{0.5\textwidth}
                \includegraphics[width=\textwidth]{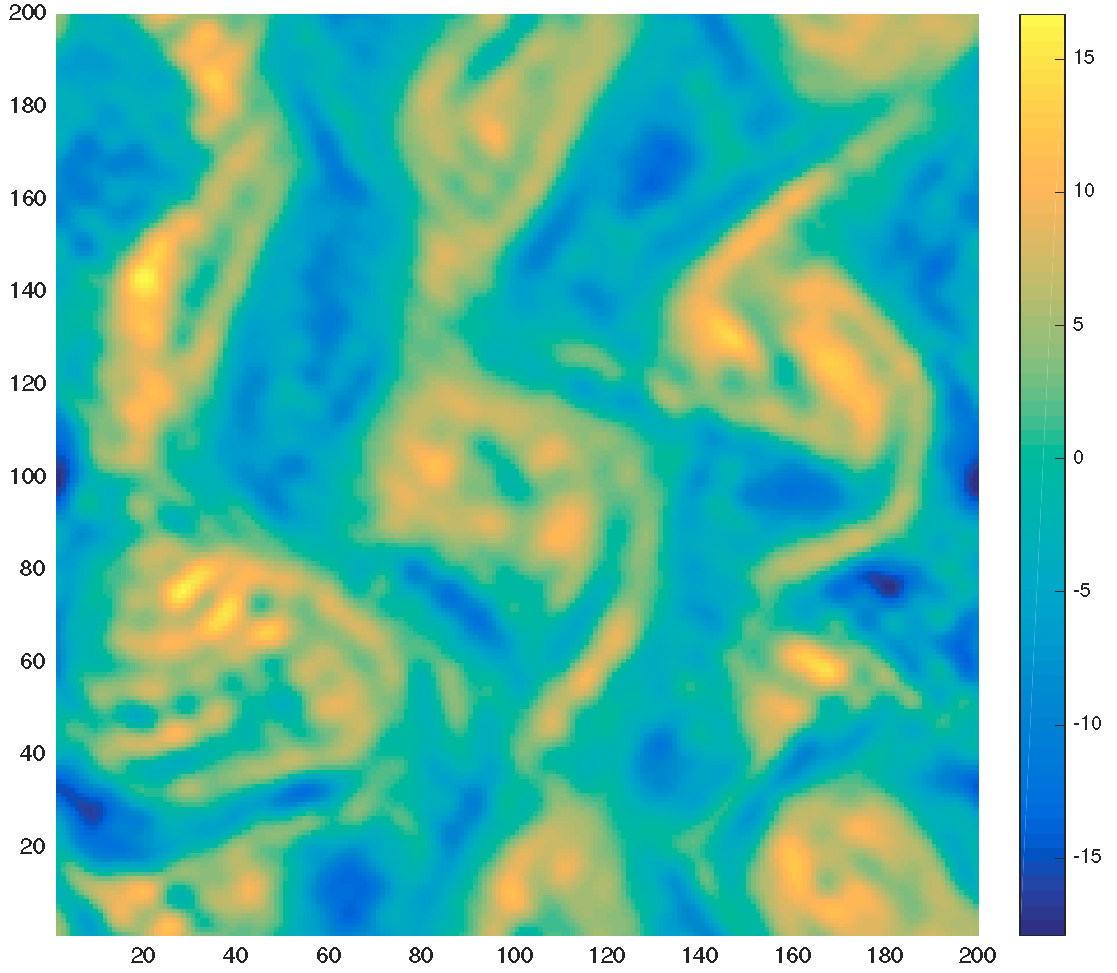}
                \caption{The ``true'' vorticity at time $t=5$}
                \label{fig:estimate2}
        \end{subfigure}%
        ~ 
        \begin{subfigure}[b]{0.5\textwidth}
                \includegraphics[width=\textwidth]{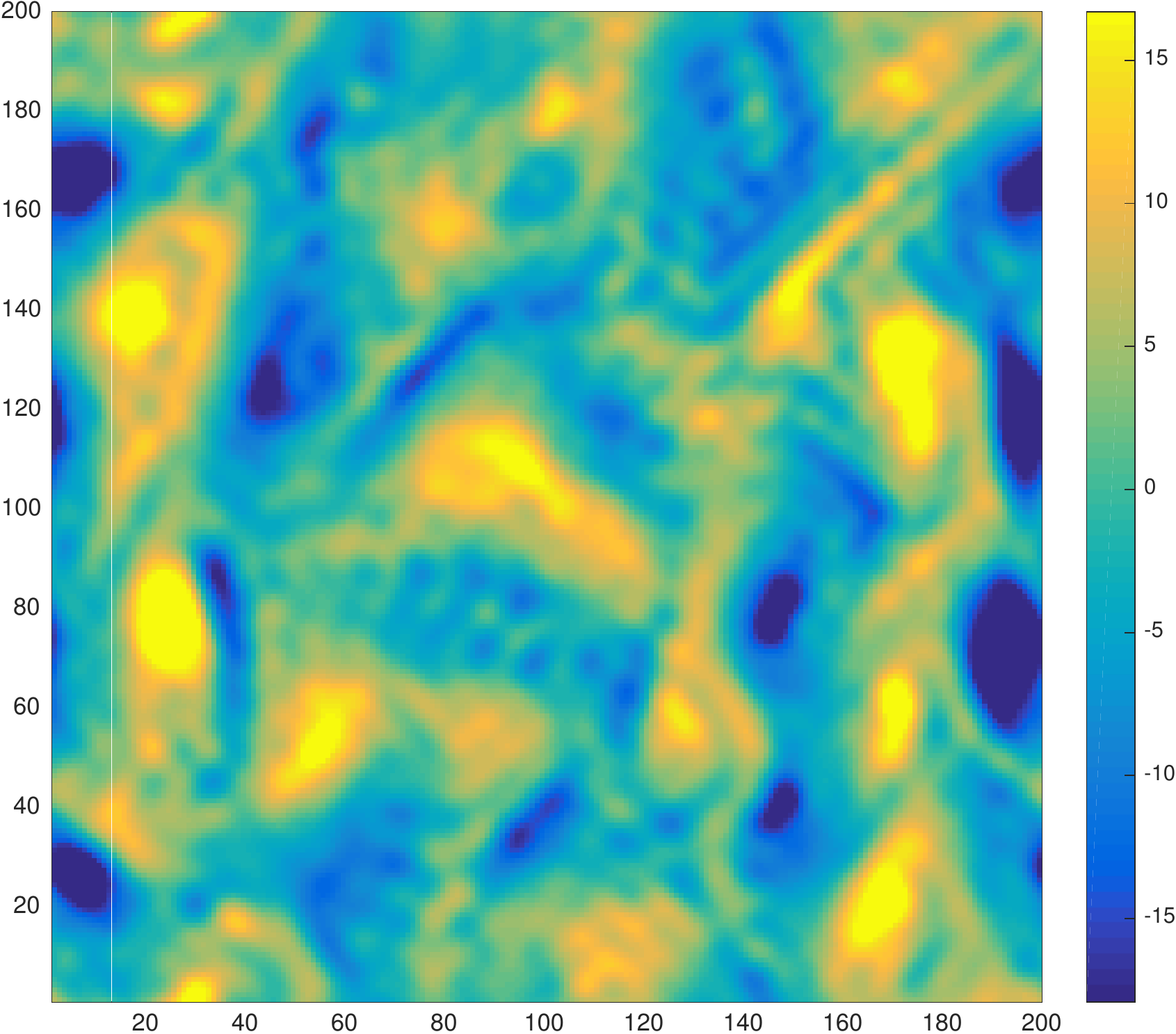}
                \caption{The estimate at $t=5$, rel.err. = $12\%$}
                \label{fig:truth2}
        \end{subfigure}%

        \begin{subfigure}[b]{0.5\textwidth}
                \includegraphics[width=\textwidth]{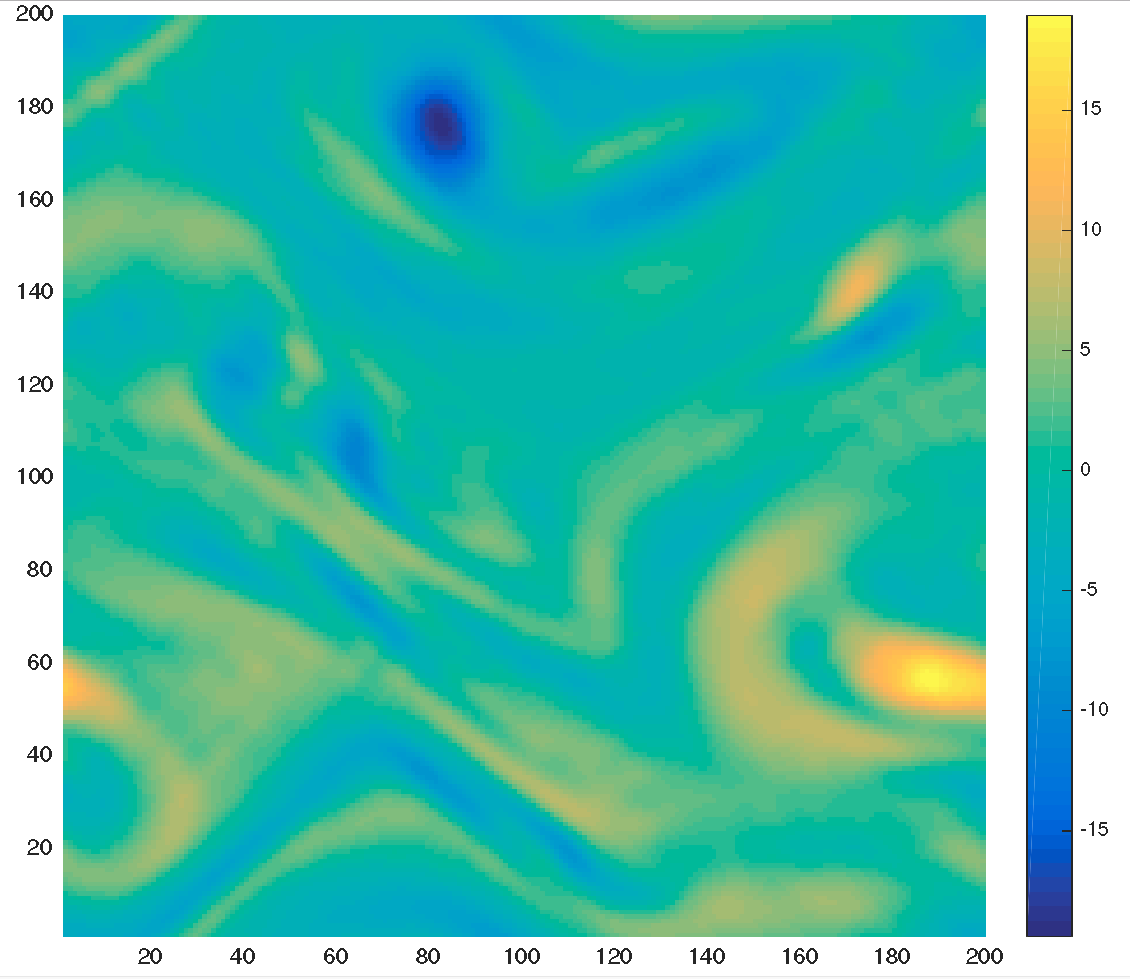}
                \caption{The ``true'' vorticity at time $t=15$}
                \label{fig:estimate3}
        \end{subfigure}%
        ~ 
        \begin{subfigure}[b]{0.5\textwidth}
                \includegraphics[width=\textwidth]{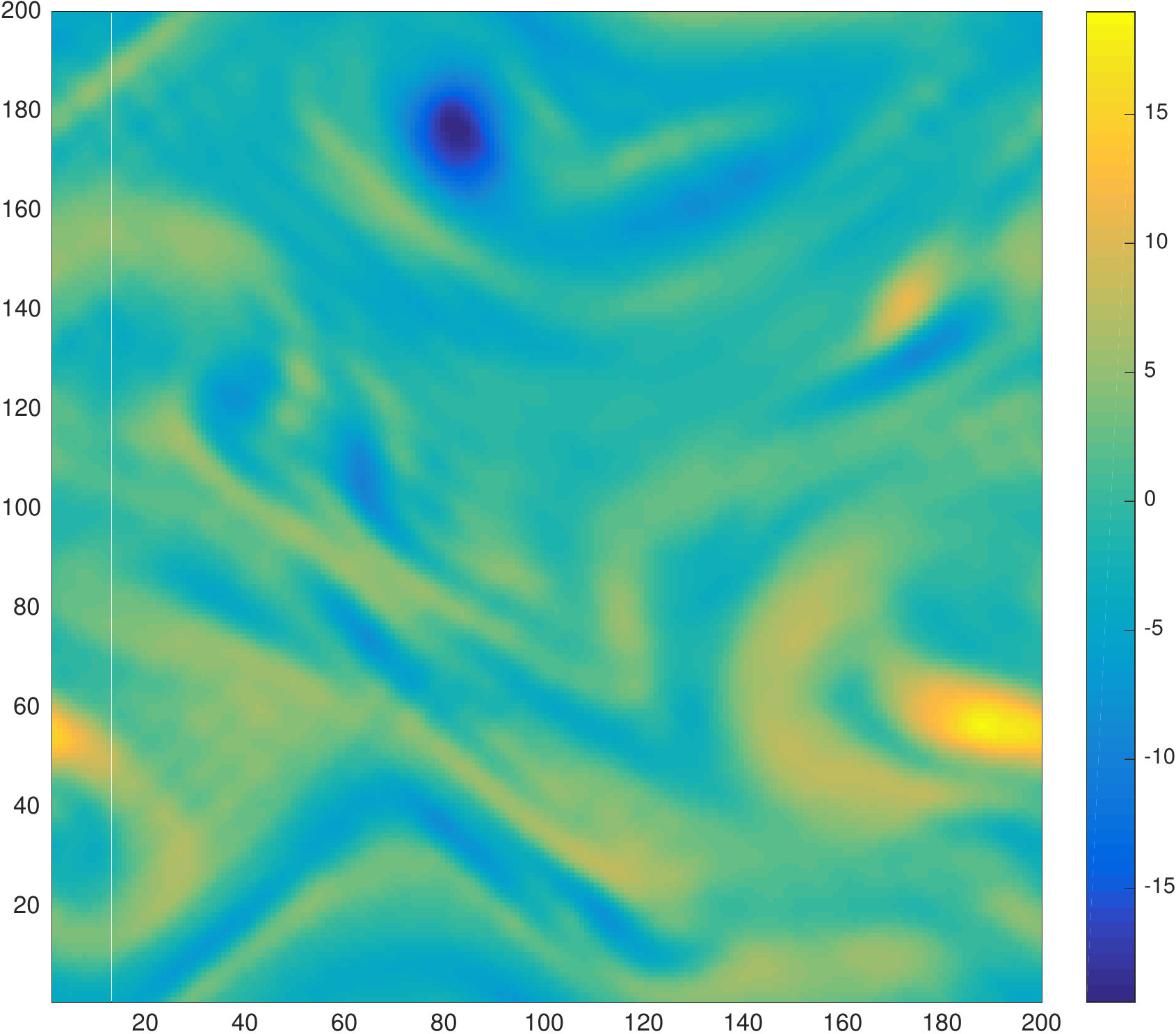}
                \caption {The estimate at $t=15$, rel.err. = $12\%$}
                \label{fig:truth3}
        \end{subfigure}%
	\caption{The ``true'' vorticity and the estimates for different times}\label{fig:estimates}
\end{figure}
\begin{figure}\centering
                \includegraphics[width=0.8\textwidth]{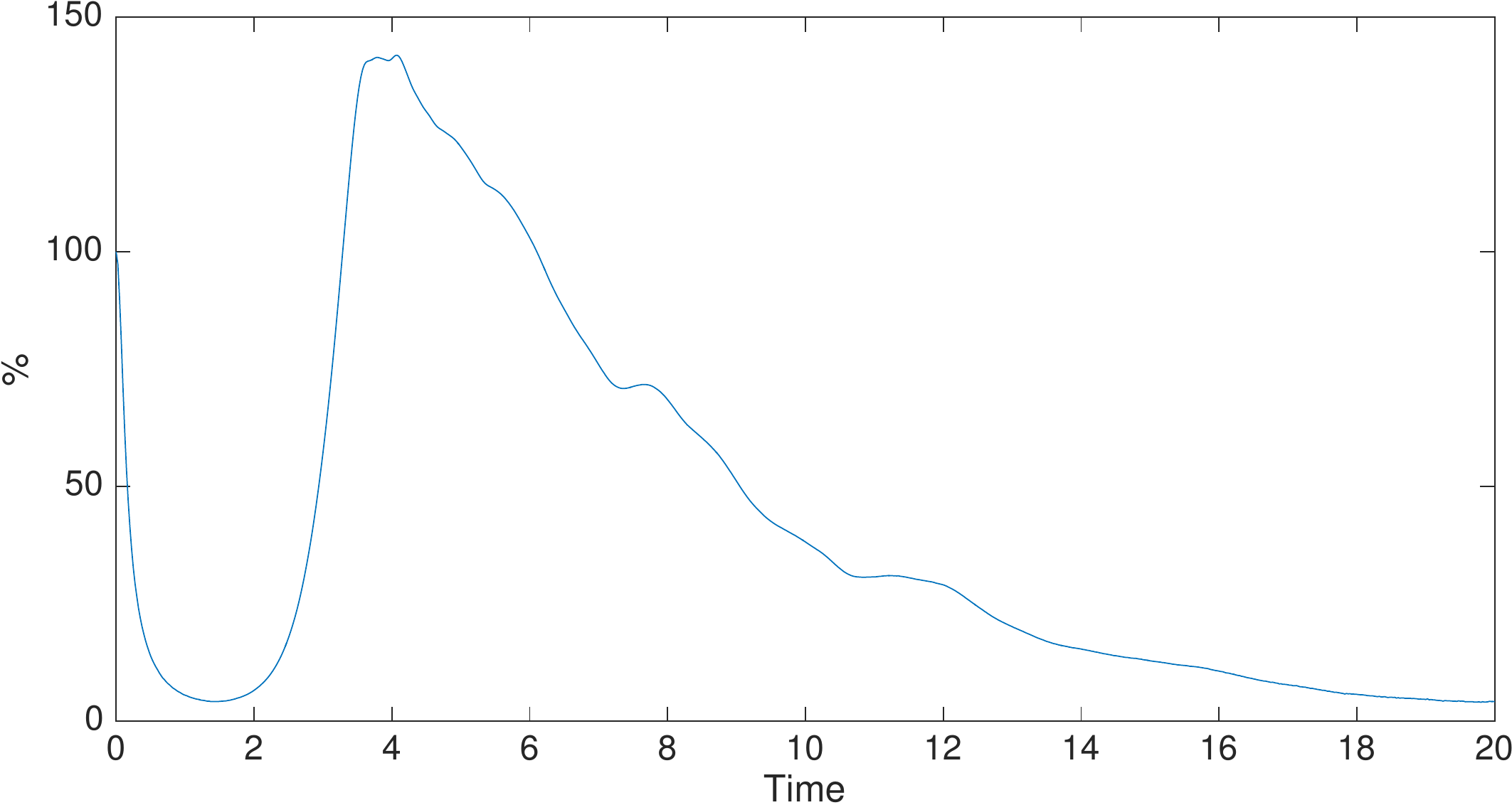}
                \caption{Relative $L^2(\Omega)$ estimation error over time}
                \label{fig:rel_err}
              \end{figure}
\section{Conclusion}
\label{sec:conclusion}
The paper presented a new data assimilation algorithm for Navier-Stokes equations which is based upon the skew-symmetry of the non-linear term. The algorithm can be applied to generic bilinear systems or skew-symmetric nonlinear systems without major revisions. A very challenging topic for the future research is to investigate the relation between the proposed sufficient conditions and positive/negative Lyapunov exponents of a bilinear equation, and to relax the pointvise LMI to a condition including a long term averages.

\end{document}